\newtheorem{theorem}{Theorem}
\newtheorem{corollary}{Corollary}
\newtheorem{lemma}{Lemma}
\newtheorem{definition}{Definition}
\newtheorem{assumption}{Assumption}
\newtheorem{remark}{Remark}
\def\forall{\text{for all\ }}
\title{\LARGE 
{Decentralized Fictitious Play in Near-Potential Games with Time-Varying Communication Networks}}
\author{Sarper Ayd\i n, Sina Arefizadeh and Ceyhun Eksin %
\thanks{S. Aydin and C. Eksin are with the Industrial and Systems Engineering Department, Texas A\&M University, College Station, TX 77843. E-mail:{\tt\small  \; sarper.aydin@tamu.edu; eksinc@tamu.edu.} This work was supported by NSF CCF-2008855.
}}
\begin{document}
\normalsize
\maketitle

%
\begin{abstract}
We study the convergence properties of decentralized fictitious play (DFP) for the class of near-potential games where the incentives of agents are nearly aligned with a potential function. In DFP, agents share information only with their current neighbors in a sequence of time-varying networks, keep estimates of other agents' empirical frequencies, and take actions to maximize their expected utility functions computed with respect to the estimated empirical frequencies. We show that empirical frequencies of actions converge to a set of strategies with potential function values that are larger than the potential function values obtained by approximate Nash equilibria of the closest potential game. This result establishes that DFP has identical convergence guarantees in near-potential games as the standard fictitious play in which agents observe the past actions of all the other agents. 
\end{abstract}

%

%
\section{Introduction}



A game comprises of multiple agents taking actions to maximize their individual utility functions. Potential games is a special class of game in which there exists a common potential function that captures the incentives of all the agents \cite{monderer1996potential}. Potential games are used to model behavior of agents in multitude of problems including traffic routing problems in transportation systems \cite{marden2009joint}, power allocation in cognitive radio \cite{perlaza2011learning}, task scheduling in robotics \cite{eksin2017distributed}. Many distributed game-theoretic learning dynamics, e.g., best response \cite{monderer1996potential}, fictitious play (FP) \cite{monderer1996fictitious}, converge to a Nash equilibrium of any potential game. Nash equilibrium is an action profile in which no agent would benefit by unilaterally deviating from. The convergence of learning dynamics to a NE provides a justification for modeling agents as rational in strategic environments. That is, we observe individuals behaving rationally in reality despite lacking the acuteness, because they learned to act rational over time as they repeatedly interacted with each other in the same environment. Moreover, in technological systems, agents (sensors, robots) can be coded to implement these learning dynamics so that the system reaches a desired performance. 

However, the individual utility functions may deviate from the potential function due to unknown parameters, or the inability of agents to accurately compute the learning rules. Still, we would expect the learning dynamics to reach close to a Nash equilibrium of the potential game when such deviations are small. The class of near-potential games, proposed in \cite{candogan2011flows,candogan2013dynamics}, quantifies the magnitude of such deviations under which desirable convergence properties continue to hold. Specifically, agents in a near-potential game have utility functions where the difference in utility values between any pair of actions for any agent is close enough to the differences in potential function values of a potential game, so that the game-theoretic learning dynamics converge to a neighborhood of a NE of the potential game. In \cite{candogan2013dynamics}, the size of the convergence neighborhood is characterized as a function of the closeness of games for common distributed learning algorithms (best response, and FP). Here, we characterize the convergence neighborhood based on the closeness of a potential game when agents' actions follow a decentralized version of the FP, i.e., the DFP algorithm.

Agent updates in distributed learning dynamics, e.g., best response, and FP, rely on information from all the agents. In FP, agents assume that other agents pick their actions according to a stationary distribution that is the empirical frequency of their past actions \cite{young2004strategic}. Given this assumption, agents take actions to maximize their expected utilities computed with respect to these empirical frequencies. Such a response is not possible, when agents do not have access to the past actions of all the other agents, e.g., when agents can only communicate with a subset of the  agents at each step. In such a scenario, agents can only keep estimates of the empirical frequencies of the other agents, and take the action that maximizes their expected utility functions computed with respect to the {\it estimated} empirical frequencies. The DFP, developed in \cite{eksin2017distributed}, is one such learning algorithm that is fitting to communication over time-varying networks.

Given the DFP dynamics, we show that if the empirical frequencies are far from any equilibrium, i.e., are outside the set of approximate $\epsilon$-Nash equilibria of the near-potential game where the constant $\epsilon$ depends on the closeness of games, then the potential function value of the close potential game increases at every action update (Lemma \ref{lem_inc}). We note that the set of approximate $\epsilon$-Nash equilibria represents strategies in which an agent's incentive to deviate is not larger than $\epsilon$. Leveraging this result, we show that the empirical frequencies converge to a set of mixed strategies (a distribution over the actions) with potential function values larger than the lowest potential function value obtained by strategies in the set of  $\epsilon$-Nash equilibria (Theorem \ref{thm_min} and Corollary \ref{cor_1}). The closeness conditions required and the bounds established in these results are identical to the ones for FP dynamics shown in \cite{candogan2013dynamics}. That is, DFP has the same convergence guarantees as FP in near-potential games, despite the fact that agents   communicate only with a subset of the agents at each step. We validate our results in a target assignment problem in which agents are uncertain about the target locations. Numerical experiments show convergence of actions to a NE of the game with known target locations when the near-potential game is close enough to the original game, i.e., when the uncertainty is small.

\section{Near-Potential Games}\label{sec:model}
A game consists of $N$ agents represented by the set $\ccalN:=\{1,\dots,N\}$. Each agent $i\in\ccalN$ selects an action $a_i$ over a finite set of actions $\ccalA_i$ to maximize its utility (payoff) $u_i:\ccalA_i \times \ccalA_{-i} \to \reals$ where $\ccalA_{-i}:= \prod_{i \neq j} \ccalA_i$. A game $\Gamma$ is a tuple of agents $\ccalN$, action space $\ccalA^N:=\prod_{i \in \ccalN} \ccalA_i$, and utility functions $u_i(\cdot)$ for $i \in \ccalN$. 

We define the class of potential games \cite{monderer1996potential} in the following. 
\begin{definition}[Potential Games] \label{def_potential}
A game $\Gamma$ is a potential game, if there exists a function $u: \ccalA^N \rightarrow \mathbb{R}$ such that the following relation holds for all agents $i \in \ccalN$,
\begin{equation}\label{eq_def_potential}
    u(a'_i,a_{-i})-u(a_i,a_{-i})= u_i(a'_i,a_{-i})-u_i(a_i,a_{-i})
\end{equation}
where $a_i' \in \ccalA_i$ and $a_i\in \ccalA_i$ and $a_{-i} \in \ccalA_{-i}$. The corresponding function $u: \ccalA^N \rightarrow \mathbb{R}$ is called the  potential function of the game $\Gamma$. 
\end{definition}
The potential function ($u(\cdot)$)  captures the difference in individual payoffs as a result of unilateral deviation in action for every agent. The existence of a potential function assures the existence of a NE in potential games. Furthermore, every maximizer of the potential function is a NE of the potential game \cite{monderer1996fictitious}.

In order to define the class of near-potential games, we define a notion of closeness between games.

\begin{definition}[Maximum Pairwise Difference] \label{def_mpd}
Let $\Gamma=(\ccalN,\{\mathcal{A}_i,u_i\}_{i\in\ccalN})$ and  $\hat{\Gamma}=(\ccalN,\{\mathcal{A}_i,\hat{u}_i\}_{i\in\ccalN})$ be two  games with the same set of agents $\ccalN$ and action sets $\{\mathcal{A}_i\}_{i \in \ccalN}$ and, utilities $\{u_i\}_{i \in \ccalN}$ and $\{\hat{u}_i\}_{i \in \ccalN}$. Further, let $d_{(a'_i,a)}^{\Gamma}:=u_i(a'_i,a_{-i})-u_i(a_i,a_{-i})$ be a difference in utility of an agent $i$ by unilateral change to an action $a'_i \in \ccalA_i$, given joint action profile $a=(a_i,a_{-i}) \in \ccalA^N$ in the game $\Gamma$. Then, the maximum pairwise difference $d(\Gamma, \hat{\Gamma})$ between the games $\Gamma$ and $\hat{\Gamma}$ is defined as, 

\begin{equation}\label{eq_mpd}
    d(\Gamma, \hat{\Gamma}):=  \underset{i \in \ccalN, \, a'_i \in \ccalA_i,\, a  \in \ccalA^N }{\max} |d_{(a'_i,a)}^{\Gamma}- d_{(a_i',a)}^{\hat{\Gamma}}|.
\end{equation}
\end{definition}

The maximum pairwise difference (MPD), introduced by \cite{candogan2013dynamics}, defines the distance between two games based on the difference in agent payoffs resulting from unilateral changes to agent actions. A near-potential game is a game that is {\it close} to a potential game in terms of MPD. 

\begin{definition} [Near-Potential Games]
A game $\Gamma$ is a near-potential game if there exists a potential game $\hat{\Gamma}$ within a maximum-pairwise distance (MPD), $d(\Gamma, \hat{\Gamma}) \le \delta$ where $\delta \in \mathbb{R}^+$.
\end{definition}

The class of near potential games relaxes the condition of potential games in  \eqref{eq_def_potential} similar to ordinal, weighted, or best-response potential games. In this paper, we consider decentralized learning dynamics in $\delta$ near-potential games. We assume the nearest potential game and its potential function is known. In general, the closeness of a game to a potential game can be determined by solving a convex optimization problem---see  \cite{candogan2011flows} for details.


%

\section{Decentralized Fictitious Play}\label{sec:DFP}
%
%
%


Fictitious play is a game-theoretic learning mechanism in which each agent repeatedly takes an action to maximize its payoff based on the estimates of other agents' strategies. Agent $i$ forms an estimate of agent $j$'s strategy by keeping an empirical frequency of its past actions. 

To formally define how these estimates are formed we need the following definitions. For simplicity, we assume the action space of agents is common, i.e., $\ccalA_i = \ccalA$ where the number of possible actions is equal to $K \in \naturals^+$, i.e., $|\ccalA|=K$. 
We let $\Delta \ccalA$ be the probability space over the common action space. The strategy of agent $i$ is a distribution on the action space denoted as $\sigma_i \in \Delta\ccalA$, where $\sigma_{i}(a_i)\in [0,1]$ denotes the probability of selecting action $a_i\in \ccalA$. We define the expected utility $u_i: \Delta\ccalA^N \to \reals$ as 
\begin{align} \label{eq_ut_mix}
    u(\sigma_i,\sigma_{-i})
    &=\sum_{a \in \ccalA^{N}} u(a_i,a_{-i}) \sigma(a),
\end{align}
where $\sigma = (\sigma_i,\sigma_{-i}) \in \Delta\ccalA^N$ is the joint strategy profile. 

In standard fictitious play, agents repeatedly take actions in discrete time steps $t=1,2,\dots$. Each agent determines its next action $\sigma_{i,t}\in \Delta\ccalA$ to maximize its expected utility assuming other agents play according to a stationary distribution $f_{-i,t}:= \{ f_{j,t}\}_{j \in \ccalN\setminus i}$ where $f_{j,t} \in \Delta \ccalA$, i.e., 

\begin{equation} \label{eq_br_naive}
    {\sigma_{i,t}} \in \argmax_{{\sigma_i \in \Delta\ccalA}} u_i({\sigma_{i}},f_{-i,t}).
\end{equation}
The stationary distribution $f_{i,t}\in \Delta \ccalA$ is computed using the past empirical frequency of actions, $f_{i,t}= \frac{1}{t} \sum_{\tau=1}^t  {\sigma_{i,\tau}}$, which can equivalently be written as
\begin{equation}\label{eq_empirical_frequency}
f_{i,t}=\frac{t-1}{t}f_{i,t-1}+\frac{1}{t}{\sigma_{i,t}}.
\end{equation}
%
The recursive form above allows each agent $i$ to compute the empirical frequencies of other agents $j \in \ccalN \setminus i$ by only keeping the past empirical frequencies $\{f_{j,t-1}\}_{j\in \ccalN \setminus i}$ in memory and observing their current actions $\{\sigma_{j,t}\}_{j \in \ccalN \setminus i}$.

Here we consider a scenario where agents communicate over a {time-varying} network $\ccalG_t=(\ccalN,{\ccalE_t})$, where ${\ccalE_t}$ represents the edge set, determining the set of agents each agent can communicate with, {at each time step $t$}. In this setting, agent $i$ can only communicate with its neighbors ${\ccalN_{i,t} := \{j: (i,j)\in {\ccalE_t}\}}$. Thus, it cannot compute the empirical frequency of all the other agents as in \eqref{eq_empirical_frequency}. Instead,  we replace the actual empirical frequencies $f_{j,t}\in \Delta \ccalA$ with a local copy $\upsilon_{j,t}^i \in {\Delta\ccalA}$ kept at agent $i$. This local copy is the estimate of agent $j$'s empirical frequency at time $t$ by agent $i$. We let agent $i$'s local copy of its own empirical frequency be equal to its empirical frequency,  $\upsilon_{i,t}^i = f_{i,t}$.  In each step, agent $i$ updates its local estimate by using the local copies shared by its neighbors,
\begin{equation}\label{eq_local_update}
    \upsilon_{j,t}^i= \sum_{l \in \ccalN} {w_{jl,t}^i} \upsilon_{j,t}^{l},
\end{equation}
where $w_{jl,t}^i \ge 0$ is the weight that agent $i$ puts on agent $l$'s estimate of agent $j$ {at time t}.


In decentralized fictitious play, each agent takes an action that maximizes its expected utility (best-respond) computed using its estimates of others' empirical frequencies $\upsilon_{-i,t}^i:=\{\upsilon_{j,t-1}^{i}\}_{j\in\ccalN\setminus i}$,
\begin{equation}\label{eq_br_local}
    {\sigma_{i,t}} \in \arg \max_{  {\sigma_i \in \Delta\ccalA}} u_i({\sigma_{i}}, \upsilon_{-i,t}^i).
\end{equation}
%
An outline of the DFP algorithm is given below.

\begin{algorithm}[H] 
   \caption{DFP for Agent $i$}
\label{suboptimal_alg_inner}
\begin{algorithmic}[1]\label{alg_DFP}
   \STATE {\bfseries Input:} Local estimates $\upsilon_{-i0}^i$ and networks $\{\ccalG_t\}_{t\geq 1}$.
\FOR{$t=1,2,\cdots $} 
    \STATE Share local copies $\{\upsilon^i_{j,t}\}_{j\in\ccalN}$ with $l\in \ccalN_{i,t}$
    \STATE Update local copies $\upsilon_{j,t}^i$ \eqref{eq_local_update}.
    \STATE Determine action $\sigma_{i,t}$ \eqref{eq_br_local} and  update $f_{i,t}$ \eqref{eq_empirical_frequency}. 

  \ENDFOR 
   \end{algorithmic}
\end{algorithm}

 \section{Convergence of DFP in Near-Potential Games} \label{sec::conv}

\subsection{Game theoretic preliminaries}
Given a game $\Gamma:=\{\ccalN, \ccalA^N, \{u_i\}_{i\in\ccalN}\}$, a Nash equilibrium (NE) defines a strategy profile such that no agent can individually increase its utility by deviating to another strategy. Given a strategy profile $\sigma^*$, when there exists a profitable deviation for an agent but the increase in utility from this deviation is no more than $\epsilon\geq 0$, the strategy profile $\sigma^*$ is said to be an approximate NE. 

\begin{definition} [Approximate Nash Equilibrium] \label{def_app_Nash}
The strategy profile  $\sigma^*=(\sigma_i^*,\sigma_{-i}^*) \in \Delta \ccalA^N$ is an $\epsilon$-Nash equilibrium of the game $\Gamma$ for some $\epsilon\geq 0$ if and only if for all $i\in\ccalN$
\begin{equation} \label{eq_app_Nash}
    u_i(\sigma^*_i,\sigma^*_{-i}) - u_i(\sigma_i,\sigma_{-i}^*) \ge -\epsilon, \quad \forall \sigma_i \in \Delta\ccalA.
\end{equation}
The strategy profile $\sigma^*$ is called a pure $\epsilon$-NE if the strategy selects a single action profile $a^* = (a_i^*, a_{-i}^*)\in \ccalA^N$ with probability one. If the $\epsilon$-NE strategy profile is not pure, then it is called an $\epsilon$-mixed NE strategy profile. 
\end{definition}
We denote the set of $\epsilon$-Nash equilibria in a game $\Gamma$ using $\Delta_\epsilon$. We recover the definition of a NE strategy profile when $\epsilon=0$ in the above definition. Using the same notation, we denote the set of Nash equilibria by $\Delta_0$. 

\subsection{Convergence}

We make the following set of assumptions for the time-varying communication network.
\begin{assumption}\label{as_connect}
The network $\ccalG=(\ccalN,\ccalE_{\infty})$ is connected, where $\ccalE_{\infty}=\{(i,j)| (i,j) \in \ccalE_t, \, \text{for infinitely many t} \in \naturals \}$.
\end{assumption}
This assumption states that starting from any time $t_0$, there exists a path from agent $j$ to $i$ for any pair of agents $i$ and $j$ when we consider the edges $\bigcup_{t\geq t_0} \ccalE_{t}$.
\begin{assumption}\label{as_bounded_com}
There exists a time step $T_{B}>0$, such that for any edge $(i,j) \in \ccalE_{\infty}$ and $t \ge 1$, it holds  $(i,j) \in \bigcup_{\tau=0}^{T_{B}-1}\ccalE_{t+\tau}$.
\end{assumption}
This assumption means the edge $(i,j)\in \ccalE_{\infty}$ also belongs to the edge set $\bigcup_{t_0+T_B > t\geq t_0} \ccalE_{t}$ for any time $t_0>0$. Assumptions \ref{as_connect} and \ref{as_bounded_com} are referred to as {\it connectivity} and {\it bounded communication interval}, respectively in \cite{NedicOzdaglar}.
%
\begin{assumption}\label{as_com_weights}
There exists a scalar $0<\eta<1$, such that the following statements hold for all $j\in \ccalN$ and $i\in \ccalN$,
\begin{itemize}
    \item[\textit{(i)}] If $l \in \ccalN_{i,t} \cup  \{i\}$, then $w_{jl,t}^i \ge \eta$. Otherwise,  $w_{jl,t}^i=0$,
    \item[\textit{(ii)}] $w_{ii,t}^i =1$ for all $t$,
    \item [\textit{(iii)}] $\sum_{l \in \ccalN_{i,t} \cup \{i\} } w_{jl,t}^i=1$ for all $t$.
\end{itemize}
\end{assumption}
Assumption 3{\it (i)} makes sure that agents only put positive weight on their current neighbors' estimates in \eqref{eq_local_update}. Assumption 3{\it (ii)} ensures $\nu^i_{i,t}=f_{i,t}$ for all $t>0$. Assumption{\it(iii)} means that the weights matrix obtained by placing each agent's weights in a row is row stochastic for all times. 

Assumptions \ref{as_connect}-\ref{as_com_weights} ensure that local information stored by agent $i \in \ccalN$ reaches an agent $j \in \ccalN \setminus\{i\}$ in finite time. Next result provides a rate for the convergence of the local copies of empirical frequencies $\nu^i_{j,t}$ to the actual empirical empirical frequencies $f_{j,t}$---see \cite{arefizadeh2019distributed} for the proof. 
 
\begin{lemma}[Proposition 1,  \cite{arefizadeh2019distributed}]\label{lem_ups_to_fic}
Suppose Assumptions \ref{as_connect}-\ref{as_com_weights} hold. If $f_{j0}=\upsilon^i_{j,0}$ holds for all pairs of agents {$j\in \ccalN$ and $i\in \ccalN$}, then the local copies $\{\upsilon^i_{t}\}^{i \in \ccalN}_{ t \ge 0}$ converge to the empirical frequencies $\{f_{t}\}_{ t \ge 0}$ with rate $O(\log t /t )$, i.e., $|| \upsilon^i_{j,t} -f_{j,t} || = O(\log t /t )$ for all $j\in \ccalN$ and $i\in \ccalN$.
\end{lemma}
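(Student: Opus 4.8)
The plan is to fix a target agent $j\in\ccalN$ and follow the vector $\upsilon_{j,t}:=(\upsilon_{j,t}^1,\dots,\upsilon_{j,t}^N)$ collecting every agent's estimate of $j$'s empirical frequency (each coordinate of $\Delta\ccalA$ can be treated separately, so I would argue as if this were scalar). Stacking \eqref{eq_local_update} over $i\in\ccalN$, one step of DFP amounts to multiplying by the row-stochastic matrix $W_t^{(j)}$ with $[W_t^{(j)}]_{il}=w_{jl,t}^i$ and then overwriting the $j$-th coordinate by the just-updated $f_{j,t}$. Assumption \ref{as_com_weights}(ii)--(iii) force row $j$ of $W_t^{(j)}$ to equal $e_j^\top$, so agent $j$ behaves as a ``stubborn'' node carrying the true signal while the remaining $N-1$ agents run a consensus driven by it. I would then set $z_t^i:=\upsilon_{j,t}^i-f_{j,t}$ (so $z_t^j\equiv 0$), $\bar z_t:=(z_t^i)_{i\neq j}$, and derive the affine error recursion
\begin{equation}\label{eq_plan_rec}
\bar z_t=\bar W_t\,\bar z_{t-1}+\delta_t\bbone,\qquad \delta_t:=f_{j,t-1}-f_{j,t},
\end{equation}
where $\bar W_t$ is $W_t^{(j)}$ with row and column $j$ deleted; by Assumption \ref{as_com_weights}(i) every $\bar W_t$ is substochastic, and strictly so in any row $i$ with $j\in\ccalN_{i,t}$ (that row loses at least $\eta$ of its mass).

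Two facts then do the work. (i) The target moves slowly: from \eqref{eq_empirical_frequency}, $\delta_t=\tfrac1t(f_{j,t-1}-\sigma_{j,t})$ with $f_{j,t-1},\sigma_{j,t}\in\Delta\ccalA$, so $\|\delta_t\|\le 2/t$ --- and this holds for \emph{any} action sequence, so the best-response rule \eqref{eq_br_local} plays no role in this lemma. (ii) The products $\Phi(t,s):=\bar W_t\cdots\bar W_{s+1}$ contract geometrically, $\|\Phi(t,s)\|_\infty\le C\lambda^{t-s}$ for constants $C>0$ and $\lambda\in(0,1)$ depending only on $N,T_B,\eta$: Assumptions \ref{as_connect}--\ref{as_com_weights} guarantee that within every window of a fixed length $T=\mathcal O(NT_B)$ each follower is joined to $j$ by a used edge-chain, which I would convert into a uniform ``at least one unit of mass is lost per window'' statement and then iterate --- this is exactly the transition-matrix estimate already invoked from \cite{NedicOzdaglar}, specialized to the leaky substochastic block.

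Unrolling \eqref{eq_plan_rec} gives $\bar z_t=\Phi(t,0)\bar z_0+\sum_{s=1}^{t}\Phi(t,s)\,\delta_s\bbone$. The hypothesis $f_{j,0}=\upsilon_{j,0}^i$ for all $i$ makes $\bar z_0=0$, so the transient term vanishes, and (i)--(ii) yield $\|\bar z_t\|_\infty\le 2C\sum_{s=1}^{t}\lambda^{t-s}/s$. Splitting this sum at $s=\lceil t/2\rceil$ --- bounding $1/s\le 2/t$ on the upper half and $\lambda^{t-s}\le\lambda^{\lfloor t/2\rfloor}$ on the lower half --- shows $\sum_{s=1}^{t}\lambda^{t-s}/s=\mathcal O(\log t/t)$, hence $\|\upsilon_{j,t}^i-f_{j,t}\|=\mathcal O(\log t/t)$ for every $i$; since $j\in\ccalN$ was arbitrary, the claim follows.

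I expect the geometric-contraction estimate in (ii) to be the real obstacle: converting the purely combinatorial ``connectivity $+$ bounded communication interval $+$ uniformly positive weights'' hypotheses into an explicit per-window contraction factor for the time-varying substochastic products $\Phi(t,s)$, with the window length and $\lambda$ made concrete, is the delicate part (and is exactly what \cite{arefizadeh2019distributed} carries out). By contrast, the leader/follower reformulation, the $\mathcal O(1/t)$ slow-variation bound, and the convolution estimate are routine bookkeeping.
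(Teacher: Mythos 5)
Your outline is correct and matches the approach the paper points to: the paper does not prove this lemma itself but defers entirely to Proposition 1 of \cite{arefizadeh2019distributed}, noting only that the argument ``relies on the properties of row stochastic matrices formed by the weights,'' which is exactly your leader/follower substochastic-product contraction combined with the $O(1/t)$ slow variation of $f_{j,t}$ and the convolution estimate. The one step you leave unproved---the uniform geometric contraction of the time-varying substochastic products under Assumptions \ref{as_connect}--\ref{as_com_weights}---is precisely the content of the cited reference, so no gap arises relative to the paper (and your split-sum bound in fact yields $O(1/t)$, which is stronger than, and hence implies, the stated $O(\log t/t)$ rate).
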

The proof relies on the properties of row stochastic matrices formed by the weights $\{w^i_{jl,t}\}_{l\in\ccalN}$. Note that we do not assume the weights form a doubly stochastic matrix, i.e., the sum of agents' weights for a given agent $j$ do not need to sum to one. This assumption would require agents to coordinate the weights they use to update their local estimates for a given agent $j$'s empirical frequency. 

Next, we assume the potential function is bounded. 


{\begin{assumption}\label{as_bounded_potential}
 The potential function $u: \Delta \ccalA^N \rightarrow \reals $ of the closest potential game $\hat{\Gamma}$ is bounded, where $d(\Gamma, \hat{\Gamma}) \le \delta$ .
\end{assumption}}

%
Using the above assumptions, we provide a lower bound for the change in the expected potential function values of the nearest potential game between consecutive time steps.

\begin{lemma}\label{lem_inc}
Suppose Assumptions \ref{as_connect}-\ref{as_bounded_potential} hold. Let $\Gamma$ be a $\delta$-near potential game for some $\delta\geq 0$. The potential function of the closest potential game is $u(\cdot)$. We denote the empirical frequency sequence generated by the DFP algorithm as $\{f_t\}_{t \ge 1}$. If the empirical frequency $f_t$ is outside the $\epsilon$-NE set for $\epsilon \ge 0$, then given a large enough $T>0$ it holds 
\begin{equation} \label{eq_step_inc}
    u({f_{t+1}})-u({f_{t}})\ge \frac{\epsilon-N\delta}{t+1} -O\Big(\frac{\log t}{t^2}\Big) \; \text{ for } t\geq T.
\end{equation}
\end{lemma}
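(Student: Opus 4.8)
The plan is to track the one-step change in the expected potential $u(f_{t+1}) - u(f_t)$ and lower-bound it in two stages: first using the linearity of $u$ in each marginal to telescope the change into agent-by-agent increments, and then comparing the best-response action that each agent actually plays against its estimated frequencies with the best response it would play against the true frequencies.

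First I would use the recursive update \eqref{eq_empirical_frequency} together with multilinearity of the expected potential in the product-form strategy $f_t = \prod_i f_{i,t}$. Since only the $i$-th marginal changes when we account for agent $i$'s contribution, I would write $u(f_{t+1}) - u(f_t)$ as a sum over agents of terms of the form $\frac{1}{t+1}\big(u(\sigma_{i,t+1}, f_{-i,t}) - u(f_{i,t}, f_{-i,t})\big)$ plus higher-order cross terms that are $O(1/t^2)$ (these come from changing more than one marginal simultaneously). This is the standard FP potential-increase computation; the near-potential and decentralization effects enter as corrections to it.

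Next, for each agent $i$, I would exploit two facts. (a) Since $\Gamma$ is $\delta$-near a potential game $\hat\Gamma$, the difference between utility gains in $\Gamma$ and potential-value gains in $\hat\Gamma$ is at most $\delta$ for each unilateral deviation (Definition \ref{def_mpd}), which converts the potential increment into the \emph{utility} increment $u_i(\sigma_{i,t+1}, f_{-i,t}) - u_i(f_{i,t}, f_{-i,t})$ up to an additive $\delta$ per agent — hence the $N\delta$ term. (b) The action $\sigma_{i,t+1}$ maximizes $u_i(\cdot, \upsilon^i_{-i,t+1})$, not $u_i(\cdot, f_{-i,t})$; by Lemma \ref{lem_ups_to_fic} the estimates satisfy $\|\upsilon^i_{j,t} - f_{j,t}\| = O(\log t / t)$, so by Lipschitz continuity of $u_i$ in the opponents' strategies (which follows from finiteness of the action set and Assumption \ref{as_bounded_potential}), the value $u_i(\sigma_{i,t+1}, f_{-i,t})$ differs from $\max_{\sigma_i} u_i(\sigma_i, f_{-i,t})$ by at most $O(\log t / t)$. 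Combining, $u_i(\sigma_{i,t+1}, f_{-i,t}) - u_i(f_{i,t}, f_{-i,t}) \ge \big(\max_{\sigma_i} u_i(\sigma_i, f_{-i,t}) - u_i(f_{i,t}, f_{-i,t})\big) - O(\log t / t)$. Finally, since $f_t$ is outside the $\epsilon$-NE set, \emph{some} agent $i$ has a deviation gaining strictly more than $\epsilon$ against $f_{-i,t}$, i.e. $\max_{\sigma_i} u_i(\sigma_i, f_{-i,t}) - u_i(f_{i,t}, f_{-i,t}) \ge \epsilon$; for every other agent the same quantity is $\ge 0$. Summing the per-agent bounds, dividing by $t+1$, and absorbing the $O(1/t^2)$ cross terms and the $O(\log t/t) \cdot \frac{1}{t+1} = O(\log t/t^2)$ estimation errors gives \eqref{eq_step_inc}, with $T$ chosen large enough that all the asymptotic bounds from Lemma \ref{lem_ups_to_fic} are in force.

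The main obstacle I expect is handling the gap between "$f_t \notin \Delta_\epsilon$" (an inequality about the best-response gap against the \emph{true} frequencies $f_{-i,t}$) and the fact that agent $i$ actually best-responds to its \emph{estimate} $\upsilon^i_{-i,t+1}$: one must be careful that the $O(\log t/t)$ perturbation does not eat the $\epsilon$ slack, which is why the $\epsilon$ appears cleanly but the estimation error is relegated to a lower-order $O(\log t/t^2)$ term after dividing by $t+1$. A secondary technical point is making the multilinear expansion of $u(f_{t+1}) - u(f_t)$ rigorous and verifying that all omitted cross terms are genuinely $O(1/t^2)$ uniformly — this needs the boundedness of $u$ from Assumption \ref{as_bounded_potential} and the fact that $\|\sigma_{i,t+1} - f_{i,t}\|$ is bounded (trivially, both lie in the simplex).
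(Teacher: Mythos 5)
Your proposal is correct and follows essentially the same route as the paper's proof: a first-order (multilinear/Taylor) expansion of the potential with an $O(1/t^2)$ remainder, conversion of potential increments to utility increments at a cost of $\delta$ per agent, a single agent contributing at least $\epsilon$ because $f_t\notin\Delta_\epsilon$ while the rest contribute nonnegatively, and $O(\log t/t)$ corrections from Lemma \ref{lem_ups_to_fic} plus Lipschitz continuity. The only (immaterial) difference is bookkeeping: the paper transfers the ``outside the $\epsilon$-NE set'' property from the true frequencies to the local estimates via the contrapositive of Lemma \ref{lem_sigma_ab} and then converts the utility evaluations back, whereas you keep the $\epsilon$-gap at the true frequencies and instead argue that the best response to the estimates is an $O(\log t/t)$-approximate best response to the truth — both yield the identical bound.
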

\begin{proof}
Taylor's expansion of the expected utility yields 
\begin{align}\label{eq_taylor_preperation_1}
    u(f_{t+1})&- u(f_{t})=  \nonumber\\
    &\sum_{i=1}^N\sum_{a_{i}\in \ccalA} u(a_{i},f_{-i,t}) \big(f_{i,t+1}(a_i)-f_{i,t}(a_{i})\big)+\nonumber\\
    &O(||f_{i,t+1}-f_{i,t}||^2),
\end{align}
where $f_{i,t+1}(a_{i})$ denotes the probability of selecting action $a_i\in\ccalA$ according to the empirical frequency $f_{i,t+1}$. Using the empirical frequency updates \eqref{eq_empirical_frequency}, we have 
\begin{align}\label{eq_taylor_preperation_2}
    &u(f_{t+1})- u(f_{t})=  \nonumber\\
    &\sum_{i=1}^N\sum_{{a_{i}\in \ccalA}}\frac{1}{t+1} u(a_{i},f_{-i,t})\big({\sigma_{i,t}(a_{i})}-f_{i,t}(a_{i})\big)+ O\Big(\frac{1}{t^2}\Big). 
\end{align}
Since $u(f_{i,t},f_{-i,t})=\sum_{a_{i}\in \ccalA_i}u(a_{i},f_{-i,t}) f_{i,t}(a_{i})$ and { $u(\sigma_{i,t},f_{-i,t})=\sum_{a_{i}\in \ccalA_i}u(a_{i},f_{-i,t}) \sigma_{i,t}(a_{i})$}, we have
\begin{align}\label{eq_taylor_preperation_3}
    u(&f_{t+1})- u(f_{t})=  \nonumber\\
    &\frac{1}{t+1}\sum_{i=1}^N \big(u({\sigma_{i,t}},f_{-i,t})-u(f_{i,t},f_{-i,t})\big)+ O\Big(\frac{1}{t^2}\Big).
\end{align}
Using the fact that the game $\Gamma$ is a near-potential game, i.e., $d(\Gamma, \hat \Gamma)<\delta$, the above equality and Definition \ref{def_mpd} imply
\begin{align}\label{eq_taylor}
    &u(f_{t+1})- u(f_{t}) \ge  \nonumber\\
    &\frac{1}{t+1} \sum_{i=1}^N (u_i({\sigma_{i,t}}, f_{-i,t})- u_i(f_{i,t}, f_{-i,t})-\delta)+O\Big(\frac{1}{t^2}\Big).
\end{align}

{By the contrapositive statement of Lemma \ref{lem_sigma_ab} and via Lemma \ref{lem_ups_to_fic}, if $f_t \not \in {\Delta} _{\epsilon}$, then  there exists at least one agent $i^o \in \mathcal{N}^o \subseteq \ccalN$ where $|\ccalN^o| \ge 1$, whose  local estimates $\upsilon^{i_o}_{-i_o,t}$ are outside $(\epsilon-O(\log t /t))$-NE region for large enough $t$, i.e., $\upsilon^{i_o}_{-i_o,t}\not \in \Delta_{\epsilon-O(\log t /t)}$.} Hence, agent $i_0$'s utility value {changes} by at least $\epsilon-O(\log t /t)$,
\begin{equation}\label{eq_out}
    u_{i_o}({\sigma_{i_o, t}},\upsilon^{i_o}_{-i_o, t})-u_{i_o}(\upsilon_{i_o, t},\upsilon^{i_o}_{-i_o, t}) \ge \epsilon-O\Big(\frac{\log t}{t}\Big).
\end{equation}
Since each agent $i \in \mathcal{N} \setminus \ccalN^o$ best responds to local copies $\upsilon^i_{-i,t}$,
\begin{equation}\label{eq_br}
u_{i}({\sigma_{i, t}},\upsilon^{i}_{-i, t})-u_{i}(\upsilon_{i, t},\upsilon^{i}_{-i,t}) \ge 0.
\end{equation}
Using Lipschitz continuity of the mixed extension of the utility function and Lemma \ref{lem_ups_to_fic}, it holds again for all $i \in \ccalN$,
\begin{subequations}
\begin{align}
    &u_{i}({\sigma_{i, t}}, f_{-i, t})-  u_{i}({\sigma_{i, t}},\upsilon^{i}_{-i, t}) \ge -O\Big(\frac{\log t}{t}\Big), \label{eq_inc_1} \\
    &u_{i}(\upsilon_{i, t},\upsilon^{i}_{-i, t})-u_{i}(f_{i, t},f_{-i, t})\ge -O\Big(\frac{\log t}{t}\Big).\label{eq_inc_2}
\end{align}
\end{subequations}
Therefore, summing the left hand sides of \eqref{eq_inc_1}, \eqref{eq_inc_2}, with \eqref{eq_out} or {\eqref{eq_br}} for all agents $i \in \ccalN$ yields,
\begin{equation}\label{eq_sum_lower_1}
   \sum_{i=1}^N (u_i({\sigma_{i,t}}, f_{-i,t})- u_i(f_{i,t}, f_{-i,t})-\delta) \ge \epsilon-O\Big(\frac{\log t}{t}\Big)-N\delta.
\end{equation}
Thus, when \eqref{eq_sum_lower_1} is substituted into \eqref{eq_taylor}, we obtain the desired lower bound in \eqref{eq_step_inc}.
%
\end{proof}
The result above implies that if the empirical frequencies are outside an $\epsilon$-equilibrium for some $\epsilon>N \delta$ and time $t$ is large enough, then the potential function values will improve with each update of the algorithm in the near-potential game $\Gamma$. For the standard FP, the same improvement relation holds when we replace $O(\log(t)/t^2)$ term with $O(1/t^2)$---see Lemma 5.3 in \cite{candogan2013dynamics}. That is, the rate loss $O(\log(t))$ due to the local estimates trailing behind the actual empirical frequencies (Lemma \ref{lem_ups_to_fic}) appears as a slow down in the improvement of potential function values in \eqref{eq_step_inc}. 

Since the potential function is bounded within the space of actions, it cannot increase unboundedly which implies that the sequence will enter a set of strategies with potential function values comparable to that of approximate Nash equilibria. We formalize this intuition in the following result.
\begin{theorem}\label{thm_min}
Suppose Assumptions \ref{as_connect}-\ref{as_bounded_potential} hold. Let $\{f_t\}_{t \ge 1}$ be the sequence of empirical frequencies generated by Algorithm \ref{alg_DFP}. Then, for any $\epsilon>0$, there exists a time $T_{\epsilon}$ and $\delta>0$, such that for all $t>T_{\epsilon}$, the following holds, 
\begin{equation} \label{eq_near_eq}
    f_t \in C_{N\delta+\epsilon} := \{ \sigma \in \Delta \ccalA^N \: | \: u(\sigma) \ge \underset{y \in \Delta_{N\delta+\epsilon}}{\min} \, u(y)\}.
\end{equation}
\end{theorem}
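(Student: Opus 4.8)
The plan is to use Lemma~\ref{lem_inc} together with boundedness of the potential function (Assumption~\ref{as_bounded_potential}) in a contradiction argument. Fix $\epsilon>0$ and choose $\delta$ small; set $\epsilon' := N\delta+\epsilon$ so that the ``profitability gap'' in Lemma~\ref{lem_inc}, namely $\epsilon' - N\delta = \epsilon$, is strictly positive. Let $T$ be the threshold from Lemma~\ref{lem_inc} (applied with the parameter $\epsilon'$), and enlarge it if necessary so that for all $t\ge T$ the error term satisfies $O(\log t/t^2)\le \epsilon/(2(t+1))$; such a $T$ exists since $\log t/t^2 = o(1/t)$. Then Lemma~\ref{lem_inc} gives, for every $t\ge T$ with $f_t\notin\Delta_{\epsilon'}$,
\begin{equation*}
 u(f_{t+1})-u(f_t) \ \ge\ \frac{\epsilon' - N\delta}{t+1} - O\Big(\frac{\log t}{t^2}\Big) \ \ge\ \frac{\epsilon}{2(t+1)}.
\end{equation*}

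First I would argue that $f_t$ enters $C_{N\delta+\epsilon}$ at some time. Suppose for contradiction that $f_t\notin C_{\epsilon'}$ for all $t$ in some infinite set, or more simply that there is an infinite run of consecutive times $t\ge T$ with $f_t\notin C_{\epsilon'}$. Note $f_t\notin C_{\epsilon'}$ means $u(f_t) < \min_{y\in\Delta_{\epsilon'}} u(y)$; in particular $f_t$ itself is not in $\Delta_{\epsilon'}$ (every point of $\Delta_{\epsilon'}$ lies in $C_{\epsilon'}$ by definition of the min), so the hypothesis of Lemma~\ref{lem_inc} applies and the increment bound above holds at each such $t$. Summing over $t=T, T+1,\dots, T+m$ and using $\sum_{t=T}^{T+m} \frac{1}{t+1}\to\infty$, we get $u(f_{T+m+1}) \ge u(f_T) + \frac{\epsilon}{2}\sum_{t=T}^{T+m}\frac{1}{t+1} \to \infty$, contradicting Assumption~\ref{as_bounded_potential}. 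Hence there exists a time $t_0\ge T$ with $f_{t_0}\in C_{\epsilon'}$.

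Next I would show the sequence stays in $C_{\epsilon'}$ for all subsequent large times — but here a subtlety arises: a single step could in principle leave $C_{\epsilon'}$ if the increment $u(f_{t+1})-u(f_t)$ is negative, which happens exactly when $f_t\in\Delta_{\epsilon'}$ (then the best-response gap need not be positive and the step may slightly decrease the potential). The clean way around this is to observe that whenever $f_t\in C_{\epsilon'}$ but $f_t\notin\Delta_{\epsilon'}$, Lemma~\ref{lem_inc} forces $u(f_{t+1})\ge u(f_t) \ge \min_{y\in\Delta_{\epsilon'}}u(y)$, so $f_{t+1}\in C_{\epsilon'}$; and whenever $f_t\in\Delta_{\epsilon'}\subseteq C_{\epsilon'}$, the one-step change in $u$ is $O(1/(t+1))$ in absolute value (from \eqref{eq_taylor_preperation_3}, since the best-response gap is bounded and $\sigma_{i,t}, f_{i,t}\in\Delta\ccalA$), so $f_{t+1}$ can drop below the $C_{\epsilon'}$ threshold by at most $O(1/t)$. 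One then re-runs the summation argument from any such escape point: the escape depth is $O(1/t)$, while re-entry forces a gain of $\frac{\epsilon}{2}\sum \frac{1}{s+1}$ over the escape interval, which exceeds $O(1/t)$ after boundedly many steps; and crucially, once back in $C_{\epsilon'}$, to leave again the trajectory must first revisit $\Delta_{\epsilon'}$. A cleaner bookkeeping: define $T_\epsilon$ large enough that the total possible future decrease, $\sum_{t\ge T_\epsilon} (\text{per-step decrease when in }\Delta_{\epsilon'})$, is dominated by the increases; but since decreases only occur in $\Delta_{\epsilon'}\subseteq C_{\epsilon'}$ and are each $O(1/t)$ while pulling $u$ down by at most a summand that is immediately recovered upon the next step outside $\Delta_{\epsilon'}$, the value $u(f_t)$ cannot have a limit point strictly below $\min_{y\in\Delta_{\epsilon'}}u(y)$. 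Taking $T_\epsilon$ past the last such transient gives $f_t\in C_{N\delta+\epsilon}$ for all $t>T_\epsilon$.

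\textbf{Main obstacle.} The genuine difficulty is not the divergent-harmonic-sum contradiction (which is routine) but the boundary behavior: ensuring the sequence does not perpetually oscillate in and out of $C_{N\delta+\epsilon}$ near its boundary. The resolution hinges on the dichotomy that the \emph{only} way $u(f_t)$ can decrease is when $f_t\in\Delta_{N\delta+\epsilon}$, whereas $C_{N\delta+\epsilon}$ is a super-level set of $u$ containing all of $\Delta_{N\delta+\epsilon}$; combined with the $O(1/t)$ bound on single-step changes and the non-summable $\sum 1/t$ gain available whenever strictly outside the equilibrium set, this pins the trajectory into $C_{N\delta+\epsilon}$ eventually. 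I would need to state carefully that the error constants in Lemma~\ref{lem_inc} and in \eqref{eq_taylor_preperation_3} are uniform (they are, depending only on $N$, $K$, the utility bounds, and $\delta$), so that a single $T_\epsilon$ works.
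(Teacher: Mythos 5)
Your first step (entry into $C_{N\delta+\epsilon}$ via the divergent harmonic sum and the boundedness of $u$ from Assumption \ref{as_bounded_potential}) matches the paper's argument and is fine. The problem is in the invariance step, and it sits exactly at the point you yourself flag as the main obstacle: your resolution does not close it. You allow the trajectory to exit $C_{N\delta+\epsilon}$ from inside $\Delta_{N\delta+\epsilon}$ with escape depth $O(1/t)$, argue each excursion is recovered in boundedly many steps, and then take $T_\epsilon$ ``past the last such transient.'' Nothing in your argument shows that a last transient exists. On the contrary, the first part of the proof shows the approximate-equilibrium set is visited infinitely often, so the trajectory returns to the danger zone $\Delta_{N\delta+\epsilon}$ infinitely often; each return may be followed by a dip of depth $O(1/t)$ below the level $\min_{y \in \Delta_{N\delta+\epsilon}} u(y)$, and an escape-plus-recovery cycle is roughly potential-neutral, so boundedness of $u$ does not rule out infinitely many dips. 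Infinitely many dips would falsify the ``for all $t>T_\epsilon$'' conclusion; showing that $u(f_t)$ has no limit point strictly below the threshold is strictly weaker than the theorem.

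The paper closes this with a device your proposal is missing: an intermediate threshold $\epsilon'$ with $0<\epsilon'<\epsilon$, splitting $C_{N\delta+\epsilon}$ into $\Delta_{N\delta+\epsilon'}$ and its complement. Outside $\Delta_{N\delta+\epsilon'}$, Lemma \ref{lem_inc} (applied at level $N\delta+\epsilon'$) gives a strict increase of $u$ for large $t$, so the superlevel set is preserved. Inside $\Delta_{N\delta+\epsilon'}$, the quantity to control is not the one-step change of $u$ but the one-step change of the Nash-gap function $\psi$ of Lemma \ref{lem_sigma_ab}: since $\|f_{t+1}-f_t\|=O(1/t)$ and $\psi$ is Lipschitz, $f_{t+1}\in\Delta_{N\delta+\epsilon}$ for $t$ large enough, and every point of $\Delta_{N\delta+\epsilon}$ lies in $C_{N\delta+\epsilon}$ by the definition of the minimum. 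This makes $C_{N\delta+\epsilon}$ genuinely forward-invariant for large $t$, with no transient escapes to account for. You would need to add this two-level construction (tracking equilibrium membership rather than the potential value in the danger zone), or an equivalent device, for your argument to go through.
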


\begin{proof}
The proof has two steps. First, we show that the region $\Delta_{N \delta +\epsilon'}$ for some  $\epsilon > \epsilon' >0$ is visited infinitely often. Assume that there exists a long enough time $\hat{T}$ such that for $t>\hat{T}$, it holds that $f_t \not \in \Delta_{N\delta+\epsilon'}$. The difference in potential function values between consecutive time-steps after long enough time $t>\hat{T}$ becomes,
\begin{align}\label{eq_inc_mod}
 u(f_{t+1})- u(f_{t})&\ge  \frac{1}{t+1}\Big(N\delta+\epsilon'-N\delta-O\Big(\frac{\log t}{t}\Big)\Big).\\
 &>\frac{\epsilon'}{2(t+1)}>0.
\end{align}
Further, summing over the consecutive time steps for all $t>\hat{T}$ provides 
\begin{equation}\label{eq_sup}
    \limsup_{t\rightarrow \infty} u(f_{t+1})- u(f_{\hat{T}})\ge \sum_{t=\hat{T}+1}^{\infty} \frac{\epsilon'}{2(t+1)}.
\end{equation}
Since the potential function $u$ is {bounded} by Assumption \ref{as_bounded_potential}, the left-hand side of the inequality above has to be finite, while the right-hand side is not. Hence, this contradicts our assumption yielding $\Delta_{N\delta+\epsilon'}$ is infinitely visited by $f_t $ for all $t>\hat{T}$, holds. 

\begin{figure*}
	\centering
	\begin{tabular}{ccc}
		\includegraphics[width=.3\linewidth]{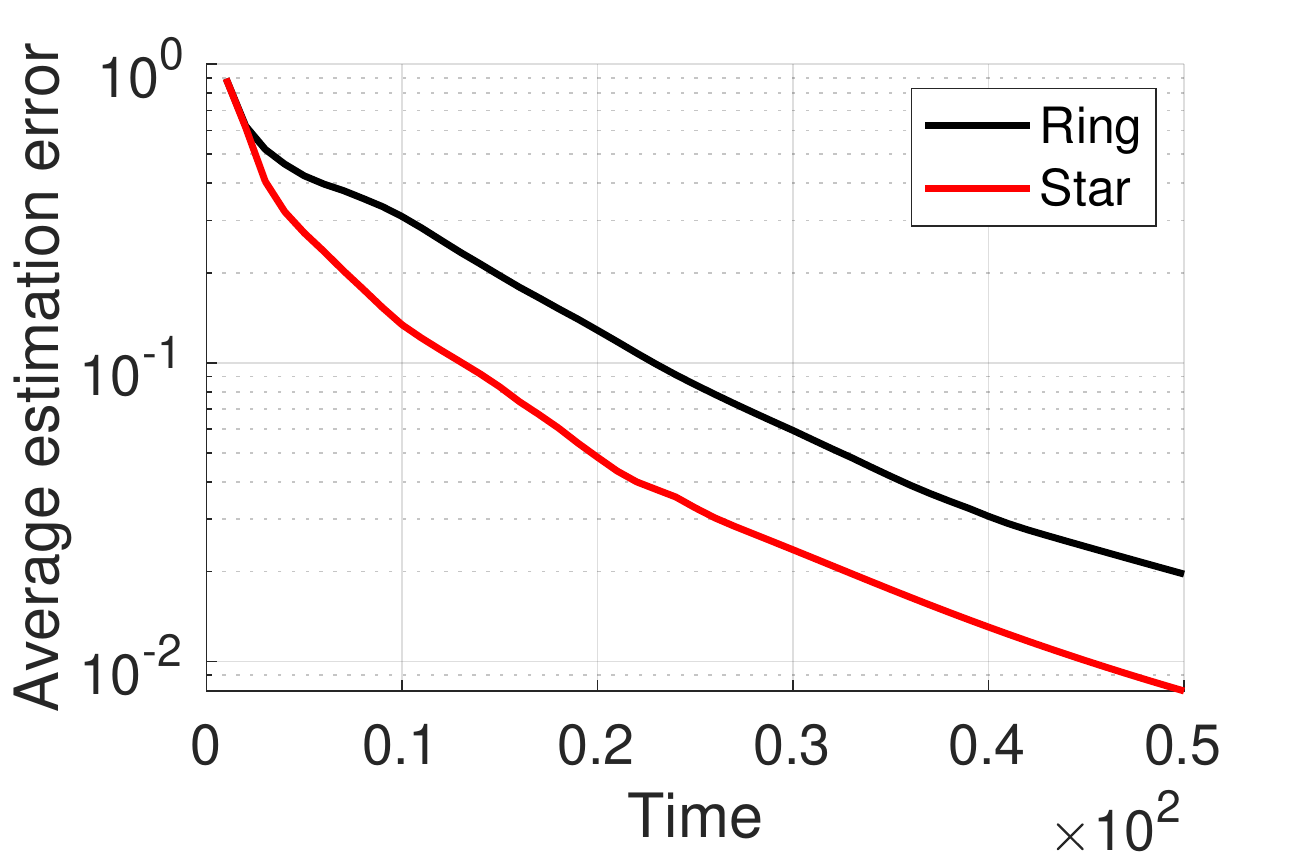}&
	\includegraphics[width=.3\linewidth]{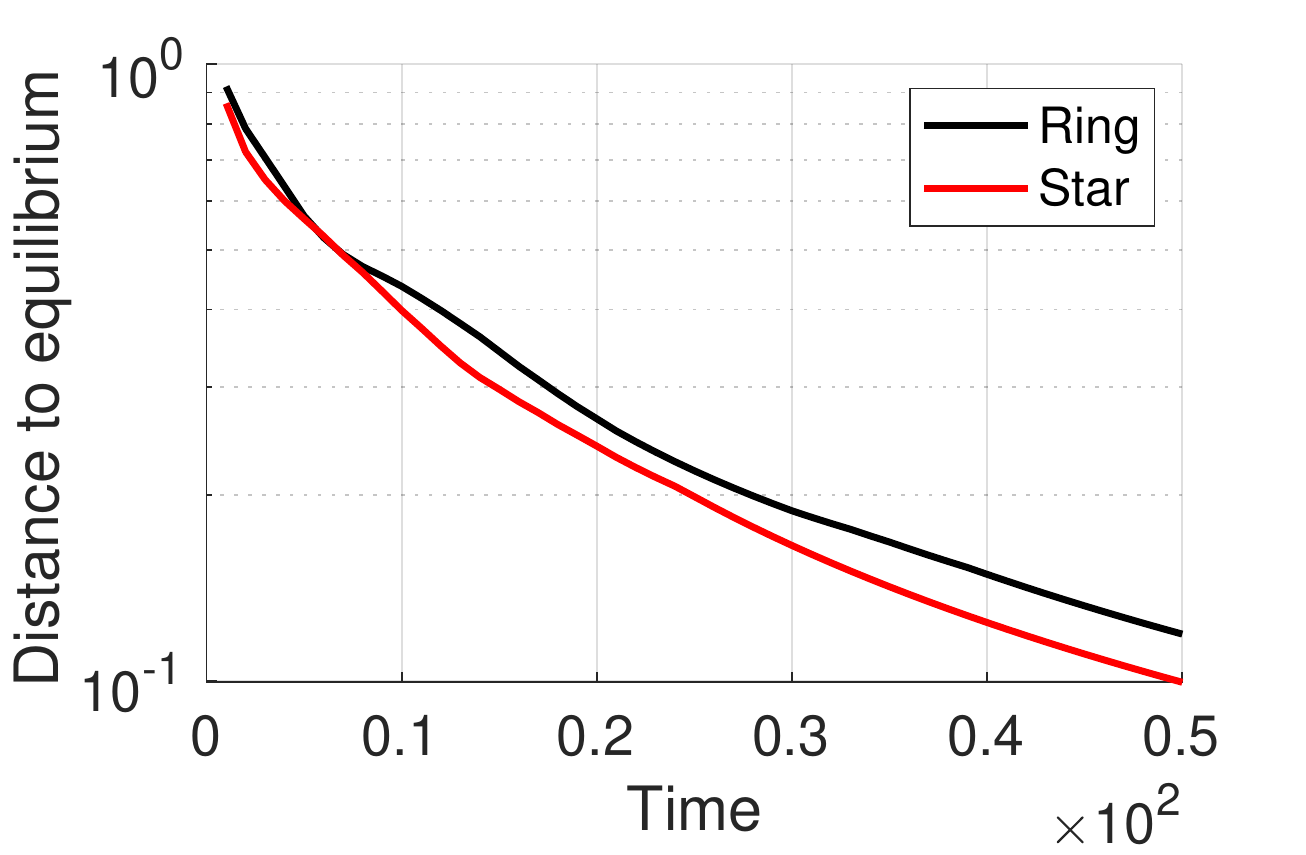}&
		\includegraphics[width=.3\linewidth]{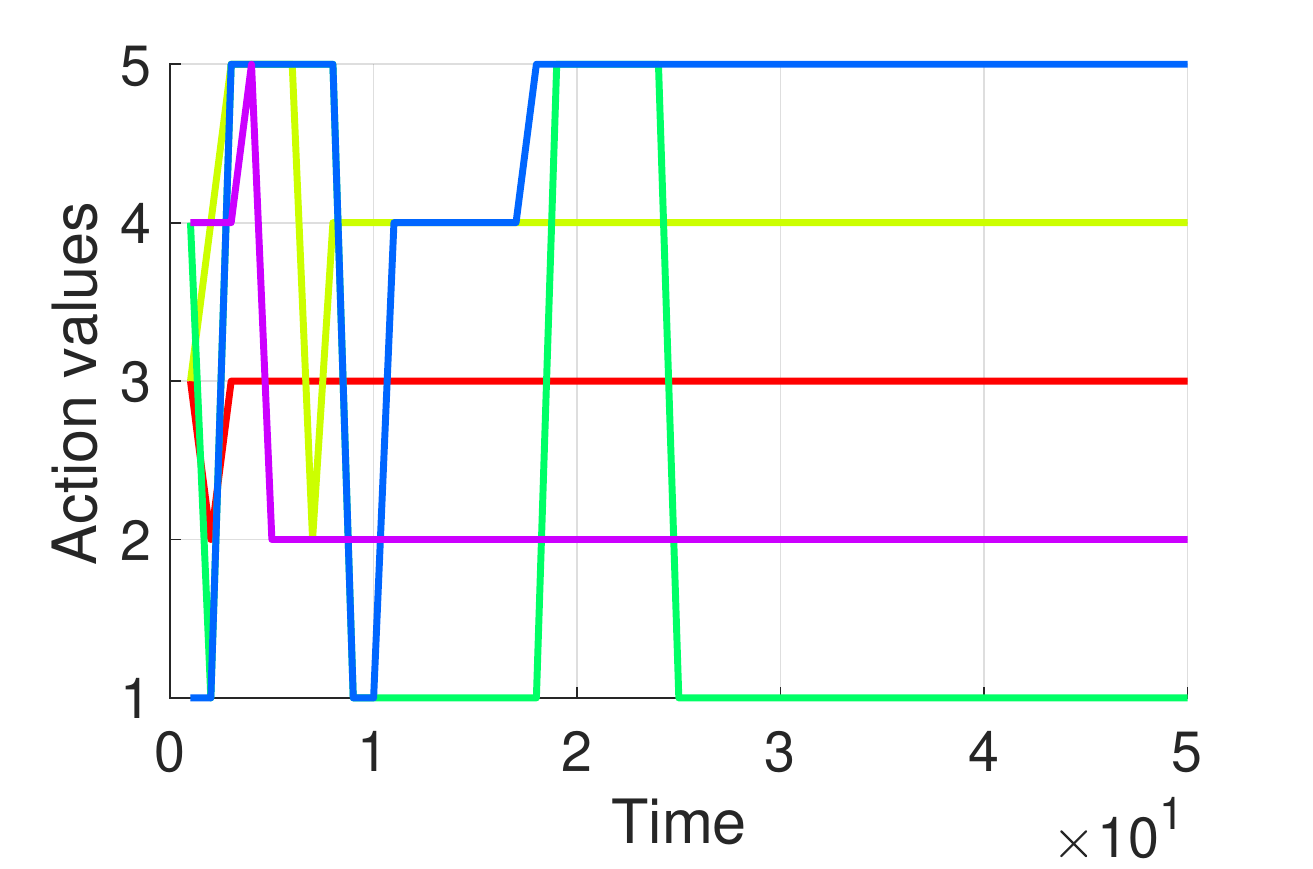} 
	\end{tabular}
	\vspace{-6pt}
	\caption{DFP in target assignment game with unknown target locations. (Left)Average estimation error $\frac{1}{N(N-1)}\sum_{i \in \ccalN} \sum_{j \in \ccalN \setminus \{i\}}||f_{it}-\upsilon_{it}^j ||$ over 20 runs. (Middle) Average distance to Nash equilibrium $\frac{1}{N}\sum_{i \in \ccalN} || f_{it}-a^*_i||$ over 20 runs.
 (Right) Action selections from a single run. } \vspace{-1pt}
	\label{fig_conv}
	\vspace{-12pt}
\end{figure*}

In the second part, we show that if $f_t\in C_{N\delta +\epsilon}$ at a large enough $t$, then $f_t$ will remain in $C_{N\delta +\epsilon}$. Consider the case that $f_t \in \Delta_{N\delta+\epsilon'}$. Observe that $||f_{t+1}-f_t||=O\Big(\frac{1}{t}\Big)$ as per \eqref{eq_empirical_frequency}. Thus, there exists some $T''>\hat{T}$ and for all $t>T''> \hat{T}$ such that we have
\begin{equation}\label{eq_eps}
    f_{t+1} \in \Delta_{N\delta+\epsilon}
\end{equation}
by Lemma \ref{lem_ups_to_fic} and the Lipschitz continuity of the mixed extension of the potential function. Next, we consider the case that $f_t \in  C_{N\delta+\epsilon} \setminus \Delta_{N\delta+\epsilon'}$. Using \eqref{eq_inc_mod} and the definition of $C_{N\delta+\epsilon}$, it holds that 
\begin{equation}\label{eq_eps_2}
    u(f_{t+1}) > u(f_t) \ge \underset{y \in \Delta_{N\delta+\epsilon}}{\min} u(y).
\end{equation}
Thus there exists some $T''>\hat{T}$ such that if $f_t \in  C_{N\delta +\epsilon}$ for $t\geq T''$, then $f_{t+1}\in  C_{N \delta +\epsilon}$.
\end{proof}

The following result shows that in the limit empirical frequencies converge to $C_{N \delta}$.

\begin{corollary}\label{cor_1}
Suppose Assumptions \ref{as_connect}-\ref{as_bounded_potential} hold. Let $\{f_t\}_{t \ge 1}$ be the sequence generated by Algorithm \ref{alg_DFP}. The empirical frequencies $\{f_t\}_{t\ge0}$, converge to the set given below,
\begin{equation}
    C_{N\delta} := \{ \sigma \in \Delta \ccalA^N \: | \: u(\sigma) \ge \underset{y \in \Delta_{N\delta}}{\min} \, u(y)\}.
\end{equation}
\end{corollary}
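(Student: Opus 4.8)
The plan is to obtain the corollary as the limiting case $\epsilon\downarrow 0$ of Theorem~\ref{thm_min}, after showing that the nested sets $C_{N\delta+\epsilon}$ shrink down to $C_{N\delta}$. Abbreviate $m(\beta):=\min_{y\in\Delta_\beta}u(y)$, so that $C_\beta=\{\sigma\in\Delta\ccalA^N:u(\sigma)\ge m(\beta)\}$ and the corollary asks for $\mathrm{dist}(f_t,C_{N\delta})\to 0$. The single auxiliary fact I would establish first is the right-continuity of $m$ at $\beta=N\delta$, namely $m(N\delta+\epsilon)\to m(N\delta)$ as $\epsilon\downarrow 0$; everything else is a compactness wrap-up of Theorem~\ref{thm_min}.

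To prove this continuity, observe that by Definition~\ref{def_app_Nash} one may write $\Delta_\beta=\{\sigma\in\Delta\ccalA^N:g(\sigma)\le\beta\}$, where $g(\sigma):=\max_{i\in\ccalN}\big(\max_{a\in\ccalA}u_i(a,\sigma_{-i})-u_i(\sigma)\big)$ is the worst unilateral improvement gap (the inner maximum over $\Delta\ccalA$ is attained at a vertex by linearity). Since each $u_i(a,\cdot)$ and $u_i(\cdot)$ is a multilinear, hence continuous, function on the compact set $\Delta\ccalA^N$, and $g$ is a finite max of differences of such functions, $g$ is continuous; consequently each $\Delta_\beta$ is compact, and it is nonempty because $\Delta_0\subseteq\Delta_\beta$ and $\Delta_0\ne\emptyset$ since every finite game has a mixed Nash equilibrium. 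Thus $m(\beta)$ is well defined for all $\beta\ge 0$ and, because $\beta\mapsto\Delta_\beta$ is nondecreasing, $m$ is nonincreasing; in particular $L:=\lim_{\epsilon\downarrow 0}m(N\delta+\epsilon)$ exists with $L\le m(N\delta)$. For the reverse inequality, take $\epsilon_k\downarrow 0$ and minimizers $y_k\in\Delta_{N\delta+\epsilon_k}$ with $u(y_k)=m(N\delta+\epsilon_k)$; by compactness pass to a convergent subsequence $y_k\to y^\star$, and continuity of $g$ gives $g(y^\star)\le N\delta$, i.e. $y^\star\in\Delta_{N\delta}$, so $u(y^\star)\ge m(N\delta)$, while continuity of $u$ gives $u(y^\star)=\lim_k u(y_k)=L$. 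Hence $L\ge m(N\delta)$, and therefore $L=m(N\delta)$.

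With the continuity of $m$ in hand I would argue by contradiction. If $\mathrm{dist}(f_t,C_{N\delta})$ does not tend to $0$, there is $\rho>0$ and a subsequence $t_k\to\infty$ with $\mathrm{dist}(f_{t_k},C_{N\delta})\ge\rho$; by compactness of $\Delta\ccalA^N$ pass to a further subsequence so that $f_{t_k}\to\bar f$, necessarily with $\bar f\notin C_{N\delta}$, i.e. $u(\bar f)<m(N\delta)$. Put $\gamma:=m(N\delta)-u(\bar f)>0$ and, using the continuity just established, choose $\epsilon>0$ with $m(N\delta+\epsilon)>m(N\delta)-\gamma/2$. By Theorem~\ref{thm_min} there is $T_\epsilon$ such that $f_t\in C_{N\delta+\epsilon}$, hence $u(f_t)\ge m(N\delta+\epsilon)>m(N\delta)-\gamma/2$, for every $t>T_\epsilon$. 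On the other hand $u(f_{t_k})\to u(\bar f)=m(N\delta)-\gamma<m(N\delta)-\gamma/2$, so $u(f_{t_k})<m(N\delta)-\gamma/2$ for all large $k$; since $t_k\to\infty$ this eventually contradicts the previous sentence. Hence $\mathrm{dist}(f_t,C_{N\delta})\to 0$, which is the asserted convergence.

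The only substantive point is the auxiliary continuity of the value function $m$ at $N\delta$ — equivalently, that $C_{N\delta+\epsilon}$ decreases to $C_{N\delta}$ as $\epsilon\downarrow 0$ — which is exactly where continuity (multilinearity) of the mixed-extension utilities and compactness of $\Delta\ccalA^N$ enter; once it is granted, the corollary is an immediate squeeze of Theorem~\ref{thm_min}. A minor technical check worth flagging is that $m(\beta)$ is well defined for every $\beta\ge 0$, i.e. that $\Delta_\beta$ is nonempty, which follows from existence of a mixed Nash equilibrium of $\Gamma$.
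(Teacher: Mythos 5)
Your proposal is correct, and it reaches the same conclusion as the paper by a noticeably different route for the key auxiliary fact. The paper deduces the corollary from Theorem~\ref{thm_min} applied with $\epsilon=1/q$ together with Lemma~\ref{lem_subset_c}, which in turn rests on upper semi-continuity of the correspondence $\alpha\mapsto\Delta_\alpha$ (Lemma~\ref{lem_up_sc}) and Lipschitz continuity of the mixed extension of $u$: one shows $\Delta_{N\delta+1/q}$ lies in a small neighborhood of $\Delta_{N\delta}$, transfers this to the bound $\min_{z\in\Delta_{N\delta+1/q}}u(z)\ge\min_{y\in\Delta_{N\delta}}u(y)-\zeta$, and concludes $C_{N\delta+1/q}\subset S_\xi$, so that $f_t$ eventually enters every $\xi$-neighborhood of $C_{N\delta}$. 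You instead prove the equivalent statement directly as right-continuity of the value function $m(\beta)=\min_{y\in\Delta_\beta}u(y)$ at $\beta=N\delta$, via monotonicity of $m$ plus a compactness argument on a sequence of minimizers $y_k\in\Delta_{N\delta+\epsilon_k}$ and continuity of the deviation-gap function $g=-\psi$; you then finish by contradiction on a convergent subsequence of $\{f_t\}$ rather than by the paper's explicit neighborhood containment. The two arguments carry the same mathematical content (your $m(N\delta+\epsilon)\to m(N\delta)$ is exactly the paper's inequality \eqref{eq_lem_ck}), but yours is more elementary and self-contained, avoiding the correspondence machinery and Lipschitz constants, whereas the paper's lemma structure makes the geometric collapse of $\Delta_{N\delta+1/q}$ onto $\Delta_{N\delta}$ explicit and reusable. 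Your flagged side conditions (nonemptiness and compactness of $\Delta_\beta$, attainment of the minima, closedness of $C_{N\delta}$) are all handled correctly.
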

\begin{proof}
Consider the set $C_{N \delta + 1/q}$ for $q\in \mathbb{Z}^+$. By Theorem \ref{thm_min}, there exists a time $T_{q}$ such that for all $t> T_{q}$, $f_t \in C_{N\delta + 1/q}$ for any $q \in \mathbb{Z}^+$. Lemma \ref{lem_subset_c} states that for any $\xi>0$, $C_{N\delta + 1/q}\subset S_\xi$ for some $q\in \mathbb{Z}^+$, where $S_\xi$ is the $\xi>0$ neighborhood of $C_{N\delta}$ (see  \eqref{eq_xi_neighborhood} for a formal definition). Thus, we have that there exists a time $T_\xi$ such that for all $t> T_\xi$, $f_t\in S_\xi$ for any $\xi>0$ and some $q\in \mathbb{Z}^+$.
From the definition of $S_{\xi}$, the following holds,
\begin{equation}
    \limsup_{t \rightarrow \infty} \underset{z \in C_{N\delta} }{\min} ||z-f_t|| < \xi.
\end{equation}
Since, $\xi>0$ can be arbitrarily small and $||z-f_t|| \ge 0$, we can conclude that  $\underset{t \rightarrow \infty}{\lim }\: \underset{z \in C_{N\delta} }{\min} ||z-f_t||=0$.
\end{proof}

 The above corollary states that the joint empirical frequencies converge to a set of strategies that have potential values larger than the potential value of the strategy with the minimum potential value belonging to the approximate equilibrium set $\Delta_{N\delta}$. Note that it also implies convergence of DFP to a NE in exact potential games, i.e., when $\delta=0$, capturing the convergence results in \cite{Swenson_et_al_2014,eksin2017distributed}.

\begin{remark}
Our analysis shows that the empirical frequencies $\{f_t\}_{t\geq 1}$ generated by the DFP will be on par with potential function values of the strategies in the set of $N \delta$-equilibria of the closest potential game ($C_{N \delta}$). However, it is not clear whether the sequence $\{f_t\}_{t\geq 1}$ would remain within the neighborhood of a single Nash equilibrium or visit neighborhoods of multiple Nash equilibria that belong to the set $C_{N \delta}$. For standard FP, the sequence $\{f_t\}_{t\geq 1}$ converges to a neighborhood of a single NE given that any two Nash equilibria of the game are far enough from each other---see Theorem 5.2 in \cite{candogan2013dynamics}. We conjecture that $\{f_t\}_{t\geq 1}$ in DFP will also converge to a neighborhood of a single NE under the same assumption. 
\end{remark}
\section{Numerical Experiments}\label{sec::numeric}


We consider a target assignment problem with $N=5$ autonomous agents and $K=5$ targets with the objective is to cover all targets with minimum effort as a team. 
We represent this objective with the following utility function 
%
\begin{equation}\label{util_target}
    u_{i}(a_i=k,a_{-i})= \frac{\sum_{a_i \in \ccalA}a_i\bbone_{a_{-ik}=0}}{ \sum_{a_i \in \ccalA} a_i d_{ik}},
\end{equation}
{where $k\in \ccalA$, and $\bbone_{a_{-ik}=0} \in \{0,1\}$ is a binary value that it is equal to $1$ if none of the other agents $j \in \ccalN \setminus \{i\}$ select target $k$, and otherwise it is equal to 0.} The distance vector of agent $i$ is $d_i=[d_{i1},\cdots,d_{ik}, \cdots, d_{iK}] \in \mathbb{R}_+^K$, where $d_{ik}=||\theta_i-\theta_k||$ is the distance between position vectors of agent $i$, $\theta_i \in \mathbb{R}^2$ and target $k$, $\theta_k \in \mathbb{R}^2$ in a 2D plane. According to the utility function above, agent $i$ only receives a positive payoff by selecting target $k$, if no other agent selects the target $k$. In this case, the payoff agent $i$ receives is inversely proportional to the distance of the agent to the target selected. Hence, no agent has more utility by changing its target, if other agents cover remaining targets. The payoff function ensures that any action profile that is one-to-one assignment between agents and targets is a NE. 

Agents' positions are sampled from identical and independent normal distributions with mean $0$ and variance $0.1$ for each axis of the positions. Targets are positioned around a circle centered at origin with radius $1$ and equal distances to each other. The target positions are unknown. Each agent receives private signals $\vartheta^i_{kt}$ at each time step $t$ about the position of each target $k$. The private signals $\vartheta^i_{t}=[\vartheta^i_1,\cdots,\vartheta^i_K]^T $ for each agent $i$ come from a multivariate normal distribution with mean $\theta=[\theta_1,\cdots,\theta_K]^T$ and  covariance matrix $\sigma I$, where $\sigma=0.5$ and $I \in \mathbb{R}^{K \times K}$ is the identity matrix. We assume agents receive signals up until time $\tau=10$. At time $\tau$, agent $i$'s point estimate of target $k$'s position is given by $\hat{\theta}_k=(1/ \tau)\sum_{t=1}^{\tau} \vartheta^i_{t}$. Final time $T_f$ is set to $50$. If the beliefs and distances were identical, the game with payoffs in \eqref{util_target} is a potential game. The existence of different beliefs and different distances to targets creates a near-potential game.




For numerical experiments, we use ring and stars networks. In the ring network, each agent $i\not = N$ shares its information only with agent $i+1$, and $i=N$ sends to $i=1$. In star network, there exists a central agent, which gathers and distributes information from all other agents. Network weights are set as $w_{i,l}^i=0.75$ and $w^i_{j,l}=0.25/|\ccalN_{i,t}|, \forall{j} \in \ccalN_{i,t}$, so that $\sum_{l \in \ccalN_{i,t} \cup \{i\} } w_{jl,t}^i=1$. 



Given this setup, we generate $20$ replications with random initial positions and samples of beliefs for each network type. In both cases, the final joint action profile $a_{T_f}$ is equal to a pure NE $a^*$ in all $20$ replications. This validates Theorem \ref{thm_min} and Corollary \ref{cor_1} since converging to a pure NE assures being an element of the given set $C_{N\delta}$. Fig.~\ref{fig_conv}(Left) verifies the convergence rate $O(\log t/ t)$ of estimation error as it goes to $0$. Fig.~\ref{fig_conv}(Middle) shows the average rate of convergence of the empirical frequencies to a NE of the target assignment game.  Agents learn each others' empirical frequencies faster in the star network.  Fig.~\ref{fig_conv}(Right) indicates the action selection of agents over time for a given run. Around $T_f/2=25$, agents reach a NE. 

\section{Conclusion}
We studied the convergence properties of DFP in near-potential games when the communication network is time-varying. We showed empirical frequencies of actions converge to a set of strategies with potential function values that are large enough compared to approximate Nash equilibria. These results established that convergence properties of DFP are identical to that of FP in near-potential games.
\appendix

\begin{lemma}\label{lem_sigma_ab}
Let  $(\sigma',\sigma'') \in \Delta \ccalA^N \times \Delta \ccalA^N$ be mixed joint action profiles such that $|| \sigma'-\sigma''|| \le \xi $, for a small enough $\xi>0$. If $\sigma' \in \Delta_{\alpha}$, then $\sigma'' \in \Delta_{\alpha+\beta}$, where $\Delta_{\alpha}$ and $\Delta_{\alpha+\beta}$ are $\alpha-$NE and $\alpha+\beta-$NE sets with respect to the order given $\alpha\ge 0$ and $\beta \ge 0$.
\end{lemma}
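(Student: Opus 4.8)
The plan is to read the statement as a quantitative continuity claim and argue directly from Definition \ref{def_app_Nash} together with the Lipschitz continuity of the mixed extension of the utilities. First I would fix a constant $L>0$ such that $|u_i(\tau)-u_i(\tau')|\le L\,\|\tau-\tau'\|$ for every pair of joint mixed profiles $\tau,\tau'\in\Delta\ccalA^N$ and every agent $i\in\ccalN$. Such an $L$ exists because each $u_i$ is multilinear in the strategy coordinates and $\Delta\ccalA^N$ is compact; in particular the constant can be chosen uniform over $i$ and over a single-coordinate perturbation of the profile, which is all that is needed below. I would take $\beta:=2L\xi$, so that the phrase ``for a small enough $\xi>0$'' is made precise: given a desired gap $\beta\ge 0$, one picks $\xi\le\beta/(2L)$.

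Next, fix an agent $i\in\ccalN$ and an arbitrary deviation $\sigma_i\in\Delta\ccalA$. From $\|\sigma'-\sigma''\|\le\xi$ I get in particular $\|\sigma''_{-i}-\sigma'_{-i}\|\le\xi$ and $\|(\sigma''_i,\sigma''_{-i})-(\sigma'_i,\sigma'_{-i})\|\le\xi$. Applying the Lipschitz bound to the two utility terms and then using the hypothesis $\sigma'\in\Delta_\alpha$ gives
\begin{align*}
    u_i(\sigma''_i,\sigma''_{-i})-u_i(\sigma_i,\sigma''_{-i})
    &\ge u_i(\sigma'_i,\sigma'_{-i})-u_i(\sigma_i,\sigma'_{-i})-L\|(\sigma''_i,\sigma''_{-i})-(\sigma'_i,\sigma'_{-i})\|-L\|\sigma''_{-i}-\sigma'_{-i}\| \\
    &\ge -\alpha-2L\xi.
\end{align*}
Since $i$ and $\sigma_i$ were arbitrary, this is exactly the defining inequality for $\sigma''\in\Delta_{\alpha+2L\xi}$. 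Because the family of approximate-equilibrium sets is nested, i.e. $\Delta_\gamma\subseteq\Delta_{\gamma'}$ whenever $0\le\gamma\le\gamma'$ (a larger slack is a weaker constraint), the choice $\xi\le\beta/(2L)$ yields $\sigma''\in\Delta_{\alpha+\beta}$, as claimed.

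The only points requiring care are the existence of a uniform Lipschitz constant $L$ for the mixed extensions and the fact that the perturbation bound $L\|\sigma''_{-i}-\sigma'_{-i}\|$ is uniform over all deviations $\sigma_i\in\Delta\ccalA$; both are immediate from multilinearity of the mixed extension and compactness of $\Delta\ccalA$, so I do not expect a substantive obstacle here — the lemma is essentially a robustness (perturbation) statement for approximate equilibria. I would remark that the contrapositive form invoked in the proof of Lemma \ref{lem_inc} (``if $\sigma''\notin\Delta_{\alpha+\beta}$ then $\sigma'\notin\Delta_\alpha$'') follows by direct negation, and that the same estimate, applied with $\xi=O(\log t/t)$ from Lemma \ref{lem_ups_to_fic}, is what transfers a statement about $f_t$ to a statement about the local copies $\upsilon^i_{-i,t}$.
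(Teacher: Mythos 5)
Your argument is correct and follows essentially the same route as the paper: the paper packages the equilibrium slack into the function $\psi(\sigma)=-\max_{i,a_i}\bigl(u_i(a_i,\sigma_{-i})-u_i(\sigma_i,\sigma_{-i})\bigr)$ and invokes its Lipschitz continuity (as a finite max of Lipschitz differences) to get $|\psi(\sigma')-\psi(\sigma'')|\le\beta$ when $\xi\le\beta/L_\psi$, which is exactly your agent-by-agent estimate carried out in one step. Your version simply unpacks that computation and makes the constant explicit ($\beta=2L\xi$), so there is nothing to correct.
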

\begin{proof}
To define an approximate $\epsilon-$NE set, let $\psi:\Delta \ccalA^N \rightarrow \mathbb{R}$ be a function as follows,
\begin{equation} \label{eq_psi}
    \psi(\sigma)=-\underset{i \in \mathcal{N}, \,  a_i \in \ccalA}{\max} (u_i(a_i,\sigma_{-i})-u_i(\sigma_i,\sigma_{-i})).
\end{equation}
Then, it holds that a mixed joint action $\sigma=(\sigma_i,\sigma_{-i})$ is an $\epsilon-$NE, if and only if $\psi(\sigma) \ge -\epsilon$. By Lipschitz continuity of the mixed extension of the utility function, the difference $u_i(a_i,\sigma_{-i})-u_i(\sigma_i,\sigma_{-i})$ is Lipschitz continuous. Using the fact that $\psi(\sigma)$ is defined as maximum over finite set of such differences, $\psi(\sigma)$ is also Lipschitz continuous. Hence, there exists a Lipschitz constant $L_{\psi} \in \mathbb{R}^+$ such that for any $|| \sigma'-\sigma''|| \le \xi \le \beta / L_{\psi}$, it holds, $|\psi( \sigma')-\psi(\sigma'')| \le \beta$. Thus, by the definitions of $\psi:\Delta \ccalA^N \rightarrow \mathbb{R}$ and $\epsilon-$NE in \eqref{eq_app_Nash}, if $\sigma' \in \Delta_{\alpha}$, then $\sigma'' \in \Delta_{\alpha+\beta}$. 
\end{proof}

We use the notion of an upper semi-continuous correspondence in the next set of results.

\begin{definition}[Upper Semi-continuous Correspondence]\label{def_upsc_cr}
A correspondence $h: X \Rightarrow Y$ is upper semi-continuous, if one of the following statements hold,
\begin{itemize}
    \item For any $\bar{x} \in X$ and any open neighborhood $V$ of $h(\bar{x})$, there exists a neighborhood $U$ of $\bar{x}$, such that $h(x) \subset V$, and $h(x)$ is a compact set for all $x \in U$.
    \item $Y$ is compact, and the set, \textit{i.e.}, its graph, $\{(x,y)| x \in X, y \in h(x)\}$ is closed. 
\end{itemize}
\end{definition}
\begin{lemma} \label{lem_up_sc}
Let $h:\mathbb{R} \Rightarrow \Delta \ccalA^N $ be the correspondence representing the set of $\alpha$-NE strategies, i.e.,
\begin{equation}\label{eq_usc}
    h(\alpha)=\Delta_{\alpha}=\{ \sigma \in \Delta \ccalA^N | \psi(\sigma) \ge -\alpha\}
\end{equation}
where $\psi$ is defined in \eqref{eq_psi}.
Then, the correspondence $h:\mathbb{R} \Rightarrow \Delta \ccalA^N  $ is upper semi-continuous. 
\end{lemma}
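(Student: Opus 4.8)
The plan is to verify upper semi-continuity via the \emph{second} characterization in Definition~\ref{def_upsc_cr}: since the codomain $Y=\Delta\ccalA^N$ is compact, it suffices to show that the graph of $h$, namely
\[
\operatorname{Gr}(h)=\{(\alpha,\sigma)\in\mathbb{R}\times\Delta\ccalA^N \;:\; \sigma\in\Delta_\alpha\}=\{(\alpha,\sigma)\in\mathbb{R}\times\Delta\ccalA^N \;:\; \psi(\sigma)+\alpha\ge 0\},
\]
is closed. First I would note that $Y=\Delta\ccalA^N$, being a finite product of probability simplices, is indeed compact, so the hypothesis of the second bullet of Definition~\ref{def_upsc_cr} is met.

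Next I would record that $\psi$ is continuous on $\Delta\ccalA^N$. This is because the mixed extension of each utility is multilinear in the strategy components, hence continuous, so each difference $u_i(a_i,\sigma_{-i})-u_i(\sigma_i,\sigma_{-i})$ is continuous in $\sigma$; as $\psi$ is minus the pointwise maximum of finitely many such continuous functions (over the finite index set $\{(i,a_i): i\in\ccalN,\ a_i\in\ccalA\}$), it is continuous. In fact Lipschitz continuity of $\psi$ was already established in the proof of Lemma~\ref{lem_sigma_ab}, so I could simply cite that. Given this, the map $(\alpha,\sigma)\mapsto\psi(\sigma)+\alpha$ is continuous on $\mathbb{R}\times\Delta\ccalA^N$, and $\operatorname{Gr}(h)$ is its preimage of the closed half-line $[0,\infty)$, hence closed. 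Equivalently, in sequential form: if $(\alpha_n,\sigma_n)\to(\alpha,\sigma)$ with $\psi(\sigma_n)\ge-\alpha_n$ for all $n$, then letting $n\to\infty$ and using continuity of $\psi$ gives $\psi(\sigma)\ge-\alpha$, i.e.\ $(\alpha,\sigma)\in\operatorname{Gr}(h)$.

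Invoking the second bullet of Definition~\ref{def_upsc_cr} with $Y=\Delta\ccalA^N$ compact and $\operatorname{Gr}(h)$ closed then yields that $h$ is upper semi-continuous. As a side remark I would note that each value $h(\alpha)=\Delta_\alpha=\psi^{-1}([-\alpha,\infty))\cap\Delta\ccalA^N$ is a closed subset of a compact set, hence compact, which is also consistent with the first bullet of the definition.

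There is no substantive obstacle in this argument; the only point requiring a little care is that the domain $\mathbb{R}$ is \emph{not} compact, so one should work with the closed-graph characterization (which only constrains the codomain) rather than trying to produce the neighborhoods $U$ of the first bullet directly. A minor related subtlety is that $h(\alpha)$ may be empty for sufficiently negative $\alpha$ (since $\psi\le 0$ on $\Delta\ccalA^N$), but this is harmless: the graph remains closed and the empty set is (vacuously) compact.
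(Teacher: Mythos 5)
Your proof is correct and follows essentially the same route as the paper: continuity of $\psi$ (established in Lemma~\ref{lem_sigma_ab}), closedness of the graph, and the compact-codomain/closed-graph characterization of upper semi-continuity. If anything, your argument is slightly tighter than the paper's, which infers closedness of the graph from closedness of each level set $h(\alpha)$, whereas you obtain it directly as the preimage of $[0,\infty)$ under the continuous map $(\alpha,\sigma)\mapsto\psi(\sigma)+\alpha$.
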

\begin{proof}
Since $\psi$ is a (Lipschitz) continuous function from Lemma \ref{lem_sigma_ab}, the set $h(\alpha)$ is closed by Proposition 1.1.2 in \cite[Ch. 1]{bertsekas2009convex}. As a result, the graph of the correspondence $h$ is closed. Hence, $\Delta_{\alpha}$ is compact as $\Delta_{\alpha} \subseteq \Delta \ccalA^N$ is bounded, which satisfies Definition \ref{def_upsc_cr}.
\end{proof}

\begin{lemma}\label{lem_subset_c}
Let $C_{N\delta}$ and $C_{N\delta+1/q}$ for $q \in \mathbb{Z}^+$ be the closed sets defined as in \eqref{eq_near_eq}. Further, we define the set $S_{\xi}$ for $\xi>0$ as follows, 
\begin{equation}\label{eq_xi_neighborhood}
    S_{\xi}:= \{ \sigma \in \Delta \ccalA^N \: | \: \underset{y \in C_{N\delta}}{\min} ||\sigma-y|| < \xi\}.
\end{equation}
Then, for any $\xi>0$ and for some $q \in \mathbb{Z}^+$, it holds that $C_{N\delta+1/q} \subset S_{\xi}$.
\end{lemma}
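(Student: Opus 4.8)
The plan is to argue by contradiction, exploiting that $\{\Delta_{N\delta+1/q}\}_{q\ge 1}$ is a nested decreasing family of compact sets contracting onto $\Delta_{N\delta}$, together with continuity of the mixed extension $u$ on the compact set $\Delta\ccalA^N$.

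First I would record the monotonicity structure. Since $\psi$ from \eqref{eq_psi} is continuous (Lemma \ref{lem_sigma_ab}) and $1/q$ decreases in $q$, each set $\Delta_{N\delta+1/q}=\{\sigma:\psi(\sigma)\ge-(N\delta+1/q)\}$ is compact (Lemma \ref{lem_up_sc}), the family is nested decreasing in $q$, and $\bigcap_{q\ge1}\Delta_{N\delta+1/q}=\{\sigma:\psi(\sigma)\ge-N\delta\}=\Delta_{N\delta}$. Write $m_q:=\min_{y\in\Delta_{N\delta+1/q}}u(y)$ and $m_\infty:=\min_{y\in\Delta_{N\delta}}u(y)$; nesting gives $m_q\le m_{q+1}\le m_\infty$ for all $q$. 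The one substantive step is $m_q\to m_\infty$: choose $y_q\in\Delta_{N\delta+1/q}$ attaining $m_q$; by compactness of $\Delta\ccalA^N$ a subsequence converges, $y_{q_j}\to y^*$, and since the sets are closed and nested the tail of $(y_{q_j})$ lies in $\Delta_{N\delta+1/q}$ for each fixed $q$, so $y^*\in\bigcap_q\Delta_{N\delta+1/q}=\Delta_{N\delta}$; continuity of $u$ then gives $m_\infty\le u(y^*)=\lim_j m_{q_j}=\lim_q m_q\le m_\infty$, hence $m_q\uparrow m_\infty$. Consequently $C_{N\delta+1/q}=\{\sigma\in\Delta\ccalA^N:u(\sigma)\ge m_q\}$, and $C_{N\delta}$ is a closed (hence compact) subset of $\Delta\ccalA^N$, so $\sigma\mapsto\min_{y\in C_{N\delta}}\|\sigma-y\|$ is well defined and continuous and $S_\xi$ from \eqref{eq_xi_neighborhood} is the open $\xi$-neighborhood of $C_{N\delta}$.

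Next, suppose the claim fails for some $\xi>0$: then for every $q\in\mathbb{Z}^+$ there is $\sigma_q\in C_{N\delta+1/q}\setminus S_\xi$, i.e. $\min_{y\in C_{N\delta}}\|\sigma_q-y\|\ge\xi$ and $u(\sigma_q)\ge m_q$. By compactness of $\Delta\ccalA^N$ pass to a subsequence $\sigma_{q_j}\to\sigma^*$. Continuity of the distance function gives $\min_{y\in C_{N\delta}}\|\sigma^*-y\|\ge\xi>0$, so $\sigma^*\notin C_{N\delta}$ and therefore $u(\sigma^*)<m_\infty$. On the other hand $u(\sigma_{q_j})\ge m_{q_j}$ with $m_{q_j}\to m_\infty$ and continuity of $u$ force $u(\sigma^*)=\lim_j u(\sigma_{q_j})\ge m_\infty$---a contradiction. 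Hence for every $\xi>0$ there is $q\in\mathbb{Z}^+$ with $C_{N\delta+1/q}\subset S_\xi$.

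I expect the main obstacle to be the convergence $m_q\to m_\infty$: this is precisely where continuity of $\psi$ enters (so that the nested compact sets genuinely contract onto $\Delta_{N\delta}$ rather than onto a strictly larger set) and where continuity of $u$ is used to pass limits through minima; the rest is a routine compactness-and-contradiction argument. A minor bookkeeping point is nonemptiness of the sets involved: the finite game $\Gamma$ has a mixed Nash equilibrium, so $\Delta_0\subseteq\Delta_{N\delta}\subseteq\Delta_{N\delta+1/q}$ are nonempty, hence so are $C_{N\delta}\subseteq C_{N\delta+1/q}$, and all minima above are attained.
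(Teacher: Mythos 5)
Your proof is correct, but it takes a genuinely different route from the paper's. The paper proceeds in two modular steps: it first asserts, appealing to upper semi-continuity, that for some $\zeta>0$ the superlevel set $\{\sigma \in \Delta\ccalA^N : u(\sigma)\ge \min_{y\in\Delta_{N\delta}}u(y)-\zeta\}$ is contained in $S_\xi$; it then uses upper semi-continuity of the correspondence $\alpha\mapsto\Delta_\alpha$ (Lemma \ref{lem_up_sc}) together with Lipschitz continuity of $u$ to show $\min_{z\in\Delta_{N\delta+1/q}}u(z)\ge\min_{y\in\Delta_{N\delta}}u(y)-\zeta$ for large $q$, which places $C_{N\delta+1/q}$ inside that superlevel set. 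You instead prove the level convergence $m_q\uparrow m_\infty$ directly from the nested-compact-sets structure ($\bigcap_q\Delta_{N\delta+1/q}=\Delta_{N\delta}$ plus a convergent subsequence of minimizers), and then replace the paper's first step entirely by a sequential-compactness contradiction on hypothetical points of $C_{N\delta+1/q}\setminus S_\xi$. What your route buys is rigor at the paper's weakest point: the paper's claim that the superlevel sets $\{u\ge m_\infty-\zeta\}$ shrink into $S_\xi$ is asserted via ``properties of upper semi-continuity'' without identifying which correspondence is meant or proving it is usc, and that claim is essentially the content of the lemma itself; your contradiction argument establishes the same conclusion from first principles, needs neither Lemma \ref{lem_up_sc} nor the Lipschitz constant of $u$ (plain continuity suffices), and also records the nonemptiness of the sets involved, which the paper leaves implicit. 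What the paper's route buys is reuse of machinery it has already set up and a quantitative flavor (an explicit $\zeta/L$ relation between the level gap and the neighborhood size), which your purely topological argument does not provide.
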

\begin{proof}
 Using the properties of upper semi-continuity (Definition \ref{def_upsc_cr}), there exists $\zeta>0$ such that the set $\{ \sigma \in \Delta \ccalA^N \: | \: u(\sigma) \ge  \underset{y \in \Delta_{N\delta}}{\min} u(y)-\zeta\} $ is a subset of $\xi$-neighborhood of the set $C_{N \delta}$, i.e., $\{ \sigma \in \Delta \ccalA^N \: | \: u(\sigma) \ge  \underset{y \in \Delta_{N\delta}}{\min} u(y)-\zeta\} \subseteq S_\xi$.
Next, since the correspondence $h$ \eqref{eq_usc} is upper semi-continuous by Lemma \ref{lem_up_sc}, for any $\xi'>0$ there exists a large enough $q$ such that $\Delta_{N\delta+1/q}$ is contained in $\xi'$ neighborhood of $\Delta_{N\delta}$. That is, for any $\xi'>0$, for any point $z \in \Delta_{N\delta+1/q}$, there exists $y \in \Delta_{N\delta}$ such that $||z-y|| \le \xi'$. Given that the mixed extension of the potential function $u$ is Lipschitz continuous with Lipschitz constant $L$, and defining $\zeta/L\ge \xi' >0$, the following holds for large enough $q$,
\begin{equation}\label{eq_lem_ck}
    \underset{z \in \Delta_{N\delta+1/q}}{\min} u(z) \ge  \underset{y \in \Delta_{N\delta}}{\min} u(y)-\zeta.
\end{equation}
Hence, for any $z \in C_{N\delta+1/q}$, the inequality  \eqref{eq_lem_ck} is satisfied. Thus, for large enough $q$ and any $z \in C_{N\delta+1/q}$, it also holds $z \in S_{\xi}$ and $C_{N\delta+1/q} \subset S_{\xi}$.
\end{proof}
\bibliographystyle{IEEEtran}
\bibliography{bibliography}

\end{document}